\journal{}
\newcommand{\eps}{\varepsilon}
\newcommand{\epsb}{\varepsilon_{\mathrm{b}}}
\newcommand{\mub}{\mu_{\mathrm{b}}}
\newcommand{\sigmab}{\sigma_{\mathrm{b}}}
\newcommand{\kb}{k_{\mathrm{b}}}
\newcommand{\set}[1]{\left\{#1\right\}}
\newcommand{\p}{\partial}
\newcommand{\qand}{\quad\text{and}\quad}
\newcommand{\me}{\mathbf{e}}
\newcommand{\mr}{\mathbf{r}}
\newcommand{\mx}{\mathbf{x}}
\newcommand{\my}{\mathbf{y}}
\newcommand{\mGR}{\mathbf{G}_{\mathcal{R}}}
\newcommand{\mGE}{\mathbf{G}_{\mathcal{E}}}
\newcommand{\rd}{\mathrm{d}}
\newcommand{\vt}{\boldsymbol{\theta}}
\newcommand{\vv}{\boldsymbol{\vartheta}}
\DeclareMathOperator*{\inc}{inc}
\DeclareMathOperator*{\scat}{scat}
\theoremstyle{plain}
\newtheorem{theorem}{Theorem}[section]
\newtheorem{corollary}{Corollary}[section]
\theoremstyle{remark}
\begin{document}

\begin{frontmatter}



\title{Unveiling novel insights into Kirchhoff migration for effective object detection using experimental Fresnel dataset}

\author{Won-Kwang Park}
\ead{parkwk@kookmin.ac.kr}
\address{Department of Information Security, Cryptography, and Mathematics, Kookmin University, Seoul, 02707, Korea}

\begin{abstract}
This study investigates the applicability of Kirchhoff migration (KM) for a fast identification of unknown objects in a real-world limited-aperture inverse scattering problem. To demonstrate the theoretical basis for the applicability including unique determination of objects, the imaging function of the KM was formulated using a uniformly convergent infinite series of Bessel functions of integer order of the first kind based on the integral equation formula for the scattered field. Numerical simulations performed using the experimental Fresnel dataset are exhibited to achieve the theoretical results.
\end{abstract}

\begin{keyword}
Kirchhoff migration \sep limited-aperture inverse scattering problem \sep Bessel functions \sep numerical simulation results \sep experimental Fresnel dataset


\end{keyword}

\end{frontmatter}






\section{Introduction}\label{sec:1}
The development of effective and stable imaging techniques from measurement data in inverse problems has been an ongoing challenge for decades because it is remarkably related to human life and involves several interesting research topics in the fields of mathematics, engineering, physics, and geophysics \cite{A1, CK, N3, Z2}. Similarly, many authors have developed theoretical algorithms, but most of them suffered from ``inverse crimes" because the numerical simulations were performed using artificial synthetic data to support theoretical results. Therefore, various research teams have provided databases (e.g., Ipswich data \cite{MK3} and Fresnel data \cite{BS, GSE}) obtained from laboratory-controlled experiments and tested algorithms on real-world experimental data, which are of great interest to the inverse scattering research community.

Kirchhoff migration (KM) is a well-known and promising non-iterative technique for determining the existence and outline shape of a set of arbitrary objects in inverse scattering problems and microwave imaging. From the pioneering work by Schneider \cite{S6}, the KM has been applied various problems such as imaging of complex structures \cite{B9}, terahertz imaging \cite{DJRBSM}, imaging of the volcanic field using teleseismic data \cite{AHFSBRTL}, radar imaging \cite{ZYSL}, imaging of cracks with planar array antennas \cite{AGKPS}, and imaging of scatterers using phaseless data \cite{BV}. Although it has been confirmed that KM is a fast, stable, and effective algorithm, complete elements of the so-called multi-static response (MSR) matrix must be available. Unfortunately, many elements, including the diagonal of the MSR matrix, are unusable in experimental Fresnel dataset thus, to the best of our knowledge, the theoretical basis for applicability has not been theoretically investigated yet. Nevertheless, based on the simulation results with Fresnel experimental data, the existence and outline shape of small objects can be recognized using KM.

In this study, we consider the use of KM to identify a set of small objects from Fresnel experimental dataset. To this end, we introduce the imaging function of the KM and demonstrate that it is composed of the square of the Bessel function of order zero and the infinite series of the Bessel function of nonzero order. Following the investigation of the imaging function's structure, we provide theoretical answers to the applicability of the KM and the unique determination of the object at a specific frequency.

This study is organized as follows. Section \ref{sec:2} discusses the two-dimensional direct scattering problem with a set of small objects and introduces the imaging function of the KM from the generated MSR matrix. In Section \ref{sec:3}, we demonstrate that the imaging function is composed of the square of the Bessel function of order zero and the infinite series of the Bessel function of nonzero order. Section \ref{sec:4} presents simulation results at various frequencies using experimental dataset to support the theoretical result.

\section{Direct scattering problem and imaging function of the Kirchhoff migation}\label{sec:2}
Let $D_s$, $s=1,2,\cdots,S$, be a small object completely embedded in a homogeneous domain $\Omega\subset\mathbb{R}^2$, and $D$ represent the collection of $D_s$. Throughout this study, we assume that the objects are well separated and that $D_s$ and $\Omega$ are characterized by their dielectric permittivity values at a given angular frequency $\omega=2\pi f$, where $f$ is the ordinary frequency. Based on the simulation setup of \cite{BS}, the conductivity, permeability, and permittivity values of $\Omega$ are set to $\sigmab=0$, $\mub=4\pi\times\SI{e-7}{\henry/\meter}$, and $\epsb=\SI{8.854e-12}{\farad/\meter}$, respectively, and we denote $\eps_s$ as the permittivity of $D_s$. Given the preceding, we present the following piecewise constant:
\[\eps(\mx)=\left\{\begin{array}{ccl}
\eps_s&\text{for}&\mx\in D_s\\
\epsb&\text{for}&\mx\in\Omega\backslash\overline{D}
\end{array}\right.\]
and let $\kb=\omega\sqrt{\epsb\mub}$ be the background wavenumber.

According to \cite{BS}, the range of receivers $\mathcal{R}_n$, $n=1,2,\cdots,N(=72)$ is restricted from $\SI{60}{\degree}$ to $\SI{300}{\degree}$, with a step size of $\SI{5}{\degree}$ based on each direction of the emitters $\mathcal{E}_m$, $m=1,2,\cdots,M(=36)$. The emitters are evenly distributed with step sizes of $\SI{10}{\degree}$ between $\SI{0}{\degree}$ and $\SI{350}{\degree}$. Thus, the positions of $\mathcal{E}_m$ and $\mathcal{R}_n$ are represented as follows:
\begin{align*}
\me_m&=E\vv_m=\SI{0.76}{\meter}(\cos\vartheta_m,\sin\vartheta_m),\quad\vartheta_m=\SI{10}{\degree}(m-1),\quad m=1,2,\cdots,M\\
\mr_n&=R\vt_n=\SI{0.72}{\meter}\big(\cos(\vartheta_m+\theta_n),\sin(\vartheta_m+\theta_n)\big),\quad\theta_n=\SI{5}{\degree}(n-1),\quad n=1,2,\cdots,N.
\end{align*}

Let $u_{\inc}(\mx,\me_m)$ be the incident field due to the point source at $\mathcal{E}_m$ and $u(\mr_n,\mx)$ be the corresponding total time-harmonic field that satisfies
\[\triangle u(\mr_n,\mx)+\omega^2\mub\eps(\mx)u(\mr_n,\mx)=0\quad\text{in}\quad\Omega\]
with the transmission condition for $\p D_s$. Here, the time-harmonic $e^{-i\omega t}$ is assumed. Let $u_{\scat}(\mr_n,\me_m)$ denote the measured scattered field that satisfies the Sommerfeld radiation condition. Since every $D_s$ is a small object, the Born approximation can be used to represent $u_{\scat}(\mr_n,\me_m)$ as the following integral equation:
\begin{equation}\label{IntegralEquation}
u_{\scat}(\mr_n,\me_m)=\kb^2\int_D\left(\frac{\eps(\my)-\epsb}{\epsb\mub}\right)G(\mr_n,\my)G(\my,\me_m)\rd\my,\quad G(\mx,\my)=-\frac{i}{4}H_0^{(1)}(\kb|\mx-\my|).
\end{equation}
Here, $H_0^{(1)}$ denotes the Hankel function of order zero of the first kind, refer to \cite{BCS}.

Let $\mathbb{K}$ be the generated MSR matrix, with elements $u_{\scat}(\mr_n,\me_m)$. Then, based on the \eqref{IntegralEquation}, we introduce the following test vectors: for each $\mx\in\Omega$,
\[\mGR(\mx)=\Big(\overline{G(\mx,\mr_1)},\overline{G(\mx,\mr_2)},\ldots,\overline{G(\mx,\mr_N)}\Big)
\qand
\mGE(\mx)=\Big(\overline{G(\mx,\me_1)},\overline{G(\mx,\me_2)},\ldots,\overline{G(\mx,\me_M)}\Big)^T.\]
The traditional imaging function of KM is defined as \[\mathfrak{F}(\mx)=|\mGR(\mx)\mathbb{K}\mGE(\mx)|,\quad\mr\in\Omega.\]
Then, the map of $\mathfrak{F}(\mx)$ will contain a peak of large magnitude if $\mx\in D$, allowing to recognize the existence or outline shape of objects (refer to \cite{AGKPS}). 

Unlike traditional studies, complete elements of $\mathbb{K}$ cannot be handled due to the limited range of receivers. In this study, when an emitter sends waves toward the target, only $49$ scattered field data are measurable instead of complete $72$ scattered field data. Thus, by converting the $23$ missing scattered field data to zero, the MSR matrix $\mathbb{K}$ takes the following form:
\[\mathbb{K}=(\phi_{n,m})\in\mathbb{C}^{N\times M},\quad\text{where}\quad \phi_{n,m}=\left\{\begin{array}{cl}
\smallskip u_{\scat}(\mr_n,\me_m),&n\in \mathcal{I}(m)\\
0,&\text{otherwise}.
\end{array}\right.\]
Here, $\mathcal{I}(m)=\set{2m+p~(\text{mod}~72):p=11,12,\ldots,59}$ represents an index set, refer to Figure \ref{MSR} for an illustration. Although the complete elements of $\mathbb{K}$ cannot be used, the objects can still be retrieved using the map of $\mathfrak{F}(\mx)$.

\begin{figure}[h]
\begin{center}
\includegraphics[width=.25\textwidth]{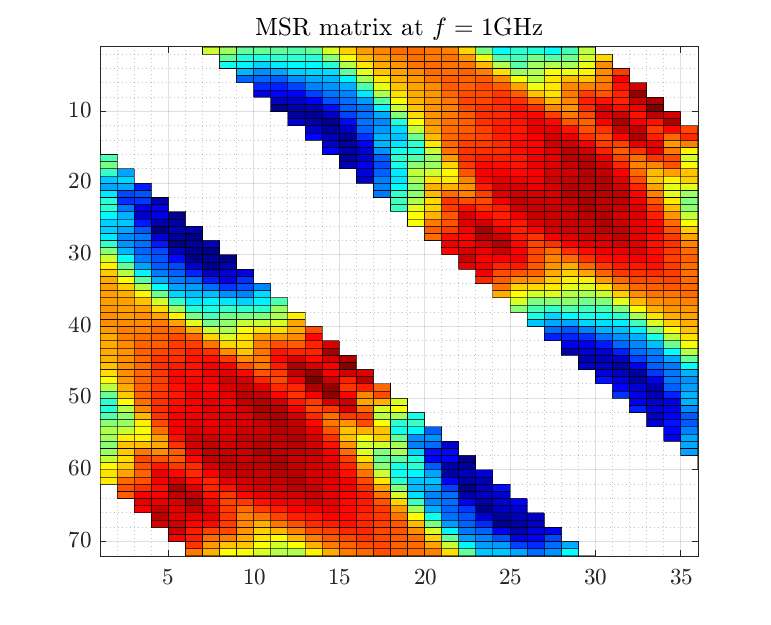}\hfill
\includegraphics[width=.25\textwidth]{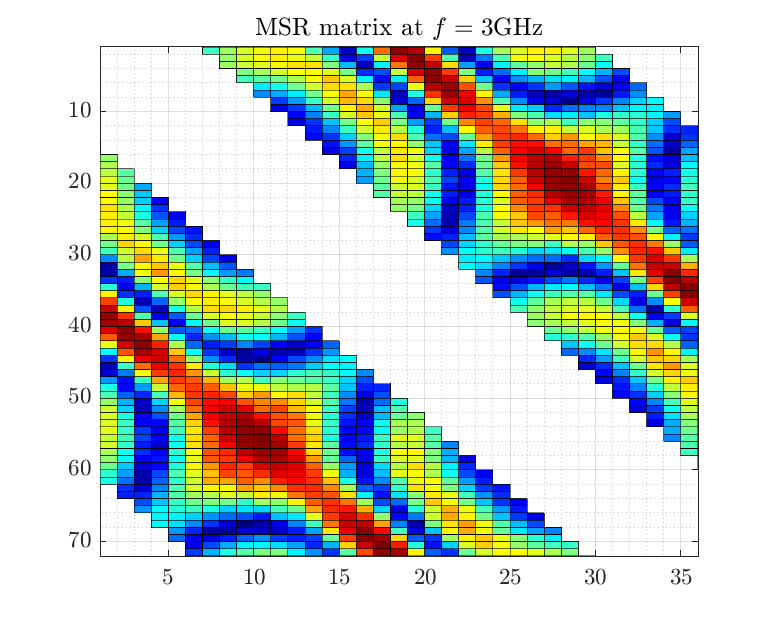}\hfill
\includegraphics[width=.25\textwidth]{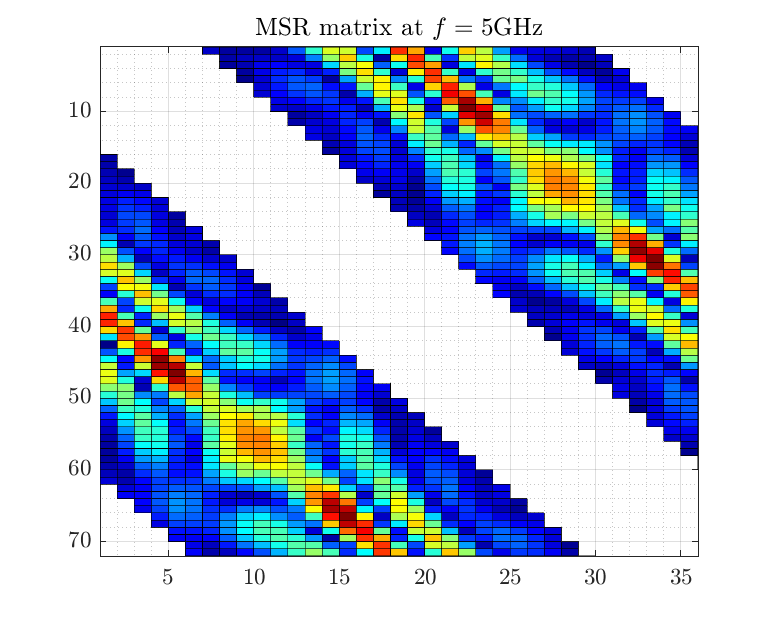}\hfill
\includegraphics[width=.25\textwidth]{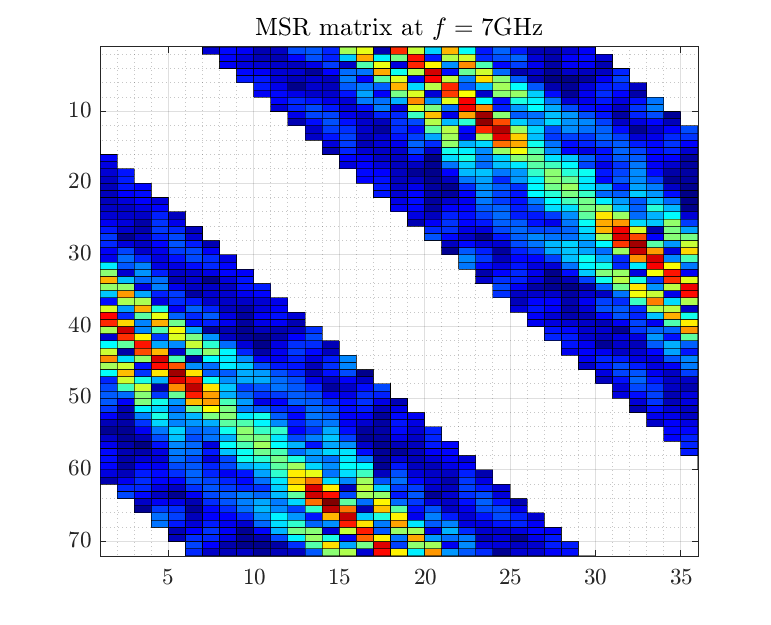}
\caption{\label{MSR}Illustration of generated MSR matrices.}
\end{center}
\end{figure}


\section{Applicability of the Kirchhoff migration with experimental dataset}\label{sec:3}

\begin{theorem}\label{Theorem}
Let $\mx-\my=|\mx-\my|(\cos\phi,\sin\phi)$ and assume that $4\kb|\mx-\mr_n|\gg1$ for all $n$ and $\mx\in\Omega$. *Therefore, $\mathfrak{F}(\mx)$ can be expressed as follows:
  \begin{equation}\label{Structure}
    \mathfrak{F}(\mx)\approx\left|\frac{1}{24RE}\int_D\left(\frac{\eps(\mr)-\epsb}{\epsb\mub}\right)\left[J_0(\kb|\mx-\my|)^2+\frac{3}{\pi}\sum_{p=1}^{\infty}\frac{1}{p}J_p(\kb|\mx-\my|)^2\sin\left(\frac{2p}{3}\pi\right)\right]\rd\my\right|,
  \end{equation}
where $J_p$ denotes the Bessel function of order $p$.
\end{theorem}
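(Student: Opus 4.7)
The plan is to expand $\mGR(\mx)\mathbb{K}\mGE(\mx)$ as a double sum, insert the Born-type integral representation \eqref{IntegralEquation} for each entry $u_{\scat}(\mr_n,\me_m)$, and then extract the claimed Bessel-series structure using large-argument Hankel asymptotics together with the Jacobi--Anger expansion and a Riemann-sum passage from the discrete emitter and receiver indices to continuous angular variables.

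First, I would interchange the finite sums with the integration over $D$, obtaining
\[
\mGR(\mx)\mathbb{K}\mGE(\mx)=\kb^2\int_D\frac{\eps(\my)-\epsb}{\epsb\mub}\sum_{m=1}^{M}\overline{G(\mx,\me_m)}\,G(\my,\me_m)\sum_{n\in\mathcal{I}(m)}\overline{G(\mx,\mr_n)}\,G(\mr_n,\my)\,\rd\my,
\]
and apply the asymptotic $H_0^{(1)}(z)\sim\sqrt{2/(\pi z)}\,e^{i(z-\pi/4)}$, justified for the receivers by the hypothesis $4\kb|\mx-\mr_n|\gg1$ and analogously for the emitters since $|\me_m|=E$ is large. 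Because each receiver point appears once conjugated and once unconjugated, the oscillatory phases $e^{\pm i\kb R}$ cancel and $\overline{G(\mx,\mr_n)}G(\mr_n,\my)\sim(8\pi\kb R)^{-1}e^{i\kb\vt_n\cdot(\mx-\my)}$; the same simplification of the emitter factors yields $(8\pi\kb E)^{-1}e^{i\kb\vv_m\cdot(\mx-\my)}$.

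Writing $\mx-\my=|\mx-\my|(\cos\phi,\sin\phi)$ and noting that, in the frame rotated with the emitter, the active receivers sample $[\pi/3,5\pi/3]$ with step $\pi/36$ while the emitters sample $[0,2\pi]$ with step $\pi/18$, I would replace the two finite sums by their Riemann integrals and expand each plane wave via $e^{i\kb r\cos\alpha}=\sum_{p\in\mathbb{Z}}i^pJ_p(\kb r)e^{ip\alpha}$. The full-period emitter integral produces $2\pi\delta_{p+q,0}$ by orthogonality, collapsing the resulting double series in $(p,q)$ to its diagonal $q=-p$, and the truncated receiver integral contributes the characteristic factor
\[
\int_{\pi/3}^{5\pi/3}e^{ip\psi}\rd\psi=(-1)^p\frac{2\sin(2p\pi/3)}{p}\qquad(p\ne 0),
\]
with $p=0$ value $4\pi/3$; this is the only step in which the $\sin(2p\pi/3)$ weights enter, reflecting the $4\pi/3$ angular aperture.

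To finish, I would apply $J_{-p}=(-1)^pJ_p$ on the diagonal, observe that the resulting $(-1)^p$ cancels against the $(-1)^p$ coming from the receiver integral, and thereby collapse each surviving term into $J_p(\kb|\mx-\my|)^2$; then I would pair the equal $\pm p$ contributions to rewrite the series as $2\sum_{p\ge 1}$ and peel off the $p=0$ term to isolate $J_0(\kb|\mx-\my|)^2$ outside the series. Bundling the numerical factors (the Hankel amplitudes, the step sizes $\pi/18$ and $\pi/36$, the arc length $4\pi/3$, and the leading $\kb^2$) into a single prefactor and taking absolute values gives \eqref{Structure} with coefficient $1/(24RE)$. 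The main obstacle, and the step I would devote the most attention to, is the Riemann-sum approximation together with the subsequent interchange of the infinite Bessel series with the integral over $D$: the latter I would justify via the uniform bound $|J_p(z)|\le(|z|/2)^p/p!$ on compacta in $\my$, which makes the series uniformly convergent and termwise integration valid, while the former rests on the fineness of the discretization steps $\pi/18$ and $\pi/36$ relative to the angular variation scale of the integrand.
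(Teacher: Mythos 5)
Your proposal follows essentially the same route as the paper's proof: large-argument Hankel asymptotics reducing the Green's functions to plane waves, a Riemann-sum passage to angular integrals, the Jacobi--Anger expansion, full-circle orthogonality for the emitters, and the truncated receiver arc $[\pi/3,5\pi/3]$ producing the $\sin(2p\pi/3)$ weights --- your symmetric double-series/Kronecker-delta bookkeeping is just a reorganization of the paper's two-stage computation (receiver sum first, then cosine orthogonality in the emitter angle). The one caveat is the prefactor: if you literally carry the step-size factors $\pi/36$ and $\pi/18$ into the constant you will not land on $1/(24RE)$; the paper arrives at that value by using the unnormalized approximation $\sum_n f(\theta_n)\approx\int f(\theta)\,\rd\theta$ throughout.
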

\begin{proof}
Since $4\kb|\my-\mr_n|\gg1$ for $n=1,2,\cdots,N$, the following asymptotic form holds (see \cite{CK} for example)
\begin{equation}\label{Asymptotic_Hankel}
H_0^{(1)}(\kb|\mx-\mr_n|)=\frac{(1-i)e^{i\kb R}}{\sqrt{\kb R\pi}}e^{-i\kb\vt_n\cdot\mx}\qand H_0^{(1)}(\kb|\mx-\me_m|)=\frac{(1-i)e^{i\kb E}}{\sqrt{\kb E\pi}}e^{-i\kb\vv_m\cdot\mx},
\end{equation}
we can examine that
\[\mGR(\mx)=\frac{(-1+i)e^{-i\kb R}}{4\sqrt{\kb R\pi}}
\Big(e^{i\kb\vt_1\cdot\mx},\ldots,
e^{i\kb\vt_N\cdot\mx}\Big),\quad
\mGE(\mx)=\frac{(-1+i)e^{-i\kb E}}{4\sqrt{\kb E\pi}}\Big(
e^{i\kb\vv_1\cdot\mx},\ldots
e^{i\kb\vv_M\cdot\mx}\Big)^T,\]
and
\[u_{\scat}(\mr_n,\me_m)=-\frac{i\kb e^{i\kb(R+E)}}{8\pi\sqrt{RE}}\int_D\left(\frac{\eps(\my)-\epsb}{\epsb\mub}\right)e^{-i\kb(\vt_n+\vv_m)\cdot\my}\rd\my.\]
Given the preceding, we can evaluate
\[\mGR(\mx)\mathbb{K}=\frac{(1+i)\kb e^{i\kb E}}{32\pi R\sqrt{\kb E\pi}}\begin{pmatrix}
\medskip\displaystyle\int_D\left(\frac{\eps(\mr)-\epsb}{\epsb\mub}\right)e^{-i
\kb\vv_1\cdot\my}\bigg[\sum_{n\in I(1)}e^{ik\vt_n\cdot(\mx-\my)}\bigg]\rd\my\\
\displaystyle\int_D\left(\frac{\eps(\mr)-\epsb}{\epsb\mub}\right)e^{-i\kb\vv_2\cdot\my}\bigg[\sum_{n\in I(2)}e^{ik\vt_n\cdot(\mx-\my)}\bigg]\rd\my\\
\vdots\\
\medskip\displaystyle\int_D\left(\frac{\eps(\mr)-\epsb}{\epsb\mub}\right)e^{-i\kb\vv_M\cdot\my}\bigg[\sum_{n\in \mathcal{I}(m)}e^{ik\vt_n\cdot(\mx-\my)}\bigg]\rd\my\\
\end{pmatrix}^T.\]

Because the following relation holds uniformly (see \cite{KP} for example)
\begin{align}
\begin{aligned}\label{JacobiAnger}
\sum_{n=1}^{N}e^{ix\cos(\theta_n-\phi)}&\approx\int_{\theta_1}^{\theta_N}e^{ix\cos(\theta-\phi)}d\theta\\
&=(\theta_N-\theta_1)J_0(x)+4\sum_{p=1}^{\infty}\frac{i^p}{p}J_p(x)\cos\frac{p(\theta_N+\theta_1-2\phi)}{2}\sin\frac{p(\theta_N-\theta_1)}{2},
\end{aligned}
\end{align}
we can evaluate
\begin{multline*}
\sum_{n\in \mathcal{I}(m)}e^{ik\vt_n\cdot(\mx-\my)}\approx\int_{\vartheta_m+\SI{60}{\degree}}^{\vartheta_m+\SI{300}{\degree}}e^{i\kb|\mx-\my|\cos(\theta-\phi)}\rd\theta\\
=\frac{4\pi}{3}J_0(\kb|\mx-\my|)+4\sum_{p=1}^{\infty}\frac{(-i)^p}{p}J_p(\kb|\mx-\my|)\cos\big(p(\vartheta_m-\phi)\big)\sin\left(\frac{2p}{3}\pi\right):=\frac{4\pi}{3}J_0(\kb|\mx-\my|)+4\mathcal{D}(\mx,\me_m).
\end{multline*}
Thus,
\begin{align*}
\mGR(\mx)\mathbb{K}\mGE(\mx)&=-\frac{1}{48\pi RE}\begin{pmatrix}
\medskip\displaystyle\int_D\left(\frac{\eps(\mr)-\epsb}{\epsb\mub}\right)e^{-i
\kb\vv_1\cdot\my}\bigg[J_0(\kb|\mx-\my|)+\frac{3}{\pi}\mathcal{D}(\mx,\me_1)\bigg]\rd\my\\
\displaystyle\int_D\left(\frac{\eps(\mr)-\epsb}{\epsb\mub}\right)e^{-i
\kb\vv_2\cdot\my}\bigg[J_0(\kb|\mx-\my|)+\frac{3}{\pi}\mathcal{D}(\mx,\me_2)\bigg]\rd\my\\
\vdots\\
\medskip\displaystyle\int_D\left(\frac{\eps(\mr)-\epsb}{\epsb\mub}\right)e^{-i
\kb\vv_M\cdot\my}\bigg[J_0(\kb|\mx-\my|)+\frac{3}{\pi}\mathcal{D}(\mx,\me_M)\bigg]\rd\my\\
\end{pmatrix}^T\begin{pmatrix}
e^{i\kb\vv_1\cdot\mx}\\
e^{i\kb\vv_2\cdot\mx}\\
\vdots\\
e^{i\kb\vv_M\cdot\mx}
\end{pmatrix}\\
&=-\frac{1}{48\pi RE}\int_D\left(\frac{\eps(\mr)-\epsb}{\epsb\mub}\right)\left[\frac{1}{M}\sum_{m=1}^{M}e^{i\kb\vv_m\cdot(\mx-\my)}\bigg(J_0(\kb|\mx-\my|)+\frac{3}{\pi}\mathcal{D}(\mx,\me_m)\bigg)\right]\rd\my.
\end{align*}

Now, using \eqref{JacobiAnger} again, we have
\[\sum_{m=1}^{M}e^{i\kb\vv_m\cdot(\mx-\my)}J_0(\kb|\mx-\my|)\approx\int_{\mathbb{S}^1}e^{i\kb\vv\cdot(\mx-\my)}J_0(\kb|\mx-\my|)\rd\vv=2\pi J_0(\kb|\mx-\my|)^2\]
and
\begin{align*}
&\sum_{m=1}^{M}e^{i\kb\vv_m\cdot(\mx-\my)}\mathcal{D}(\mx,\me_m)\approx\int_{0}^{2\pi}e^{i\kb|\mx-\my|\cos(\vartheta-\phi)}\left[\sum_{p=1}^{\infty}\frac{(-i)^p}{p}J_p(\kb|\mx-\my|)\cos\big(p(\vartheta-\phi)\big)\sin\left(\frac{2p}{3}\pi\right)\right]\rd\vartheta\\
&=\sum_{p=1}^{\infty}\frac{(-i)^p}{p}J_p(\kb|\mx-\my|)\sin\left(\frac{2p}{3}\pi\right)\int_{0}^{2\pi}\cos\big(p(\vartheta-\phi)\big)\bigg[J_0(\kb\mx-\my|)+2\sum_{p'=1}^{\infty}i^{p'}J_{p'}(\kb\mx-\my|)\cos\big(p'(\vartheta-\phi)\big)\bigg]\rd\vartheta\\
&=2\sum_{p=1}^{\infty}\frac{(-i)^p}{p}J_p(\kb|\mx-\my|)\sin\left(\frac{2p}{3}\pi\right)\sum_{p'=1}^{\infty}i^{p'}J_{p'}(\kb\mx-\my|)\left(\int_{0}^{2\pi}\cos\big(p(\vartheta-\phi)\big)\cos\big(p'(\vartheta-\phi)\big)\rd\vartheta\right)\\
&=2\pi\sum_{p=1}^{\infty}\frac{1}{p}J_p(\kb|\mx-\my|)^2\sin\left(\frac{2p}{3}\pi\right).
\end{align*}
Therefore,
\[\mGR(\mx)\mathbb{K}\mGE(\mx)=-\frac{1}{24RE}\int_D\left(\frac{\eps(\mr)-\epsb}{\epsb\mub}\right)\left[J_0(\kb|\mx-\my|)^2+\frac{3}{\pi}\sum_{p=1}^{\infty}\frac{1}{p}J_p(\kb|\mx-\my|)^2\sin\left(\frac{2p}{3}\pi\right)\right]\rd\my\]
and correspondingly, \eqref{Structure} can be derived.
\end{proof}

Based on Theorem \ref{Theorem}, because $J_0(0)=1$ and $J_p(0)=0$ for nonzero $p$, the value of $\mathfrak{F}(\mx)$ will reach its maximum when $\mx\in D$, allowing the existence or outline shape of objects to be recognized. This is the theoretical reason why KM is applicable when many elements of $\mathbb{K}$ are missing. Hence, we can conclude that the following result of unique determination holds.

\begin{corollary}[Unique determination]
Under the same conditions as in Theorem \ref{Theorem}, the objects $D_s$ can be identified uniquely through the map of $\mathfrak{F}(\mx)$.
\end{corollary}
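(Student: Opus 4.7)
The plan is to exploit the explicit structure of $\mathfrak{F}(\mx)$ derived in Theorem \ref{Theorem}. Define the kernel
\[
\mathcal{K}(\mx,\my) := J_0(\kb|\mx-\my|)^2 + \frac{3}{\pi}\sum_{p=1}^{\infty}\frac{1}{p}J_p(\kb|\mx-\my|)^2\sin\left(\frac{2p}{3}\pi\right),
\]
so that, up to the multiplicative constant $1/(24RE)$, $\mathfrak{F}(\mx)$ is the modulus of the integral of the contrast $(\eps(\my)-\epsb)/(\epsb\mub)$ against $\mathcal{K}(\mx,\cdot)$ over $D$. The goal is to show that this integral attains its maximum precisely when the probe point lies in $D$, so that the support of the contrast — and thereby the collection $\set{D_s}$ — is recovered unambiguously from the map $\mathx \mapsto \mathfrak{F}(\mx)$.

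First I would evaluate $\mathcal{K}$ on the diagonal: since $J_0(0)=1$ and $J_p(0)=0$ for every $p\geq 1$, one has $\mathcal{K}(\mx,\mx)=1$. Next I would argue that $\mathcal{K}(\mx,\my)$ strictly decreases as $|\mx-\my|$ grows, by combining the elementary bound $|J_p(t)|\leq 1$, the monotone decay of $J_0^2$ away from the origin for small arguments, and the absolute convergence of $\sum_p p^{-1}J_p^2$. With the peak structure of $\mathcal{K}$ in hand, the piecewise constant contrast — which vanishes on $\Omega\setminus\overline{D}$ and equals the nonzero value $(\eps_s-\epsb)/(\epsb\mub)$ on each $D_s$ — localizes the integrand to a neighborhood of $\my=\mx$ whenever $\mx\in D_s$, producing a pronounced peak of $\mathfrak{F}$ on $D$; for $\mx$ away from $D$ the kernel evaluated on $D$ is small and the integral is negligible.

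Uniqueness then follows because two distinct configurations of small, well-separated inclusions would yield different peak patterns in the map of $\mathfrak{F}$: the peaks sit at points where $\mathcal{K}(\mx,\mx)=1$ multiplies a nonvanishing contrast, and the well-separated hypothesis prevents cancellations between the contributions of different $D_s$. Combined with the strict maximum of $\mathcal{K}$ on its diagonal, this means the set $\set{\mx : \mathfrak{F}(\mx) \text{ is locally maximal}}$ coincides with the union of the $D_s$, giving the desired uniqueness.

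The main obstacle will be the oscillating factor $\sin(2p\pi/3)$, which takes the values $\sqrt{3}/2,\;-\sqrt{3}/2,\;0$ with period three and is therefore not of one sign. Consequently $\mathcal{K}(\mx,\my)$ is not manifestly positive, and one must bound the alternating tail $\sum_{p\geq 1} p^{-1}J_p(\kb|\mx-\my|)^2\sin(2p\pi/3)$ away from its diagonal value $0$ uniformly, in order to guarantee the strict diagonal maximum and rule out spurious off-support peaks of $\mathfrak{F}$. A secondary subtlety is that the identity \eqref{Structure} is asymptotic (valid under $4\kb|\mx-\mr_n|\gg 1$ and the Born regime), so the unique determination statement should be understood as a property of the limiting imaging kernel; verifying that the neglected terms are genuinely small on the scale at which the diagonal peak dominates is the quantitative step that legitimizes the qualitative conclusion.
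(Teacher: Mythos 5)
Your proposal takes essentially the same route as the paper: the paper offers no separate proof of the corollary beyond the remark that, by Theorem \ref{Theorem}, $J_0(0)=1$ and $J_p(0)=0$ for $p\neq 0$ force $\mathfrak{F}(\mx)$ to peak on $D$, which is exactly your diagonal evaluation $\mathcal{K}(\mx,\mx)=1$. One caution: your intermediate claim that $\mathcal{K}(\mx,\my)$ \emph{strictly decreases} in $|\mx-\my|$ is not literally true, since $J_0^2$ oscillates and has nonzero local maxima away from the origin; however, your closing paragraph correctly identifies the real issues --- the sign-indefinite factor $\sin(2p\pi/3)$ and the asymptotic nature of \eqref{Structure} --- which the paper itself leaves unaddressed, so in that respect your write-up is more candid than the original about what a fully rigorous uniqueness argument would still require.
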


\section{Simulation results obtained using experimental data}\label{sec:4}
This section presents a set of numerical simulation results from the Fresnel experimental datasets \texttt{dielTM\underline{ }dec8f.exp} and \texttt{twodielTM\underline{ }dec8f.exp} for single and two dielectric small objects, respectively. The objects are dielectric cylinders with a circular cross-section of radius $\SI{0.015}{\meter}$ and permittivity $\eps_s=3\pm0.3$ for $s=1,2$. For a detailed description of the simulation configuration, we refer to \cite{BS}.

Figure \ref{Single} depicts maps of $\mathfrak{F}(\mx)$ with different frequencies in the presence of a single object. Although the presence of the object can be detected for any frequency, its outline shape cannot be obtained when $f=\SI{1}{\giga\hertz}$.

\begin{figure}[h]
\begin{center}
\includegraphics[width=.25\textwidth]{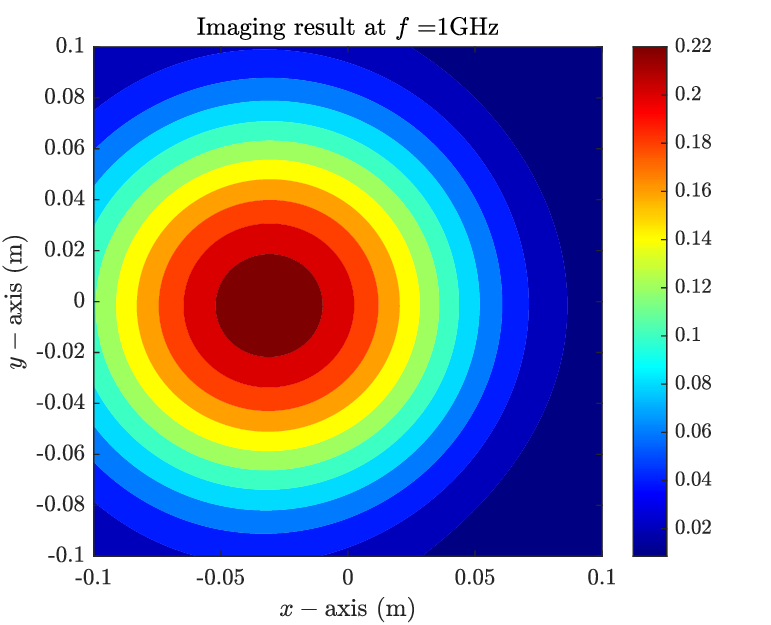}\hfill
\includegraphics[width=.25\textwidth]{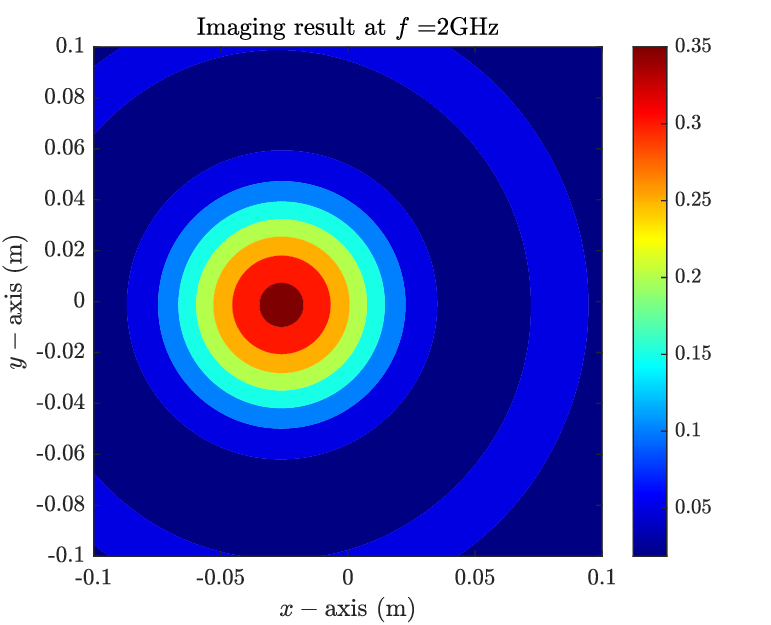}\hfill
\includegraphics[width=.25\textwidth]{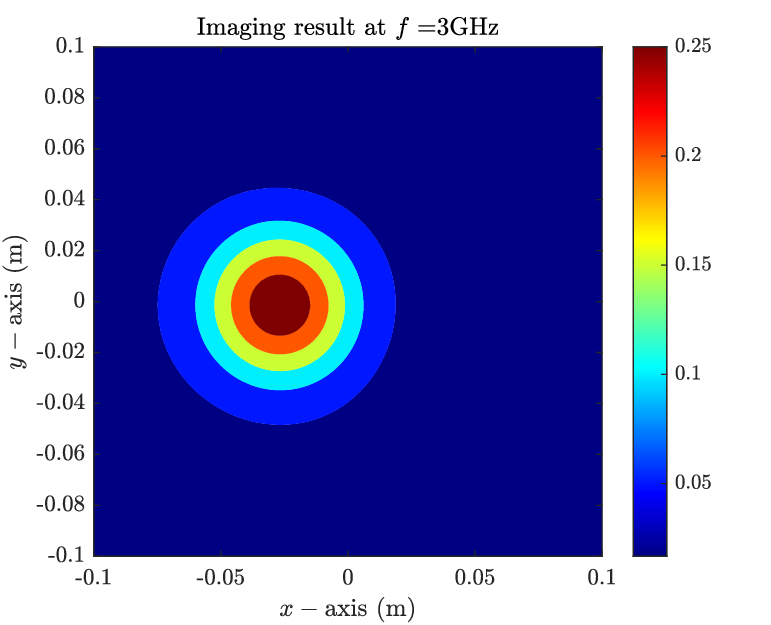}\hfill
\includegraphics[width=.25\textwidth]{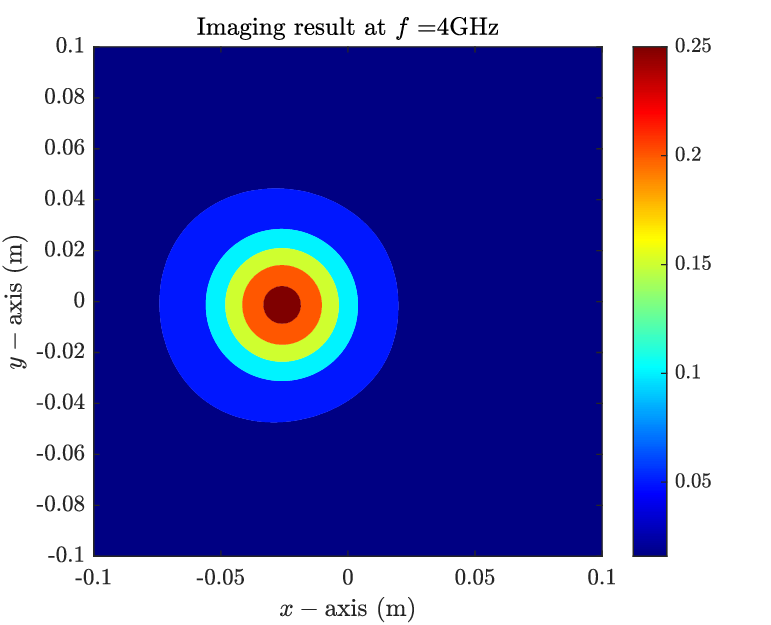}\\
\includegraphics[width=.25\textwidth]{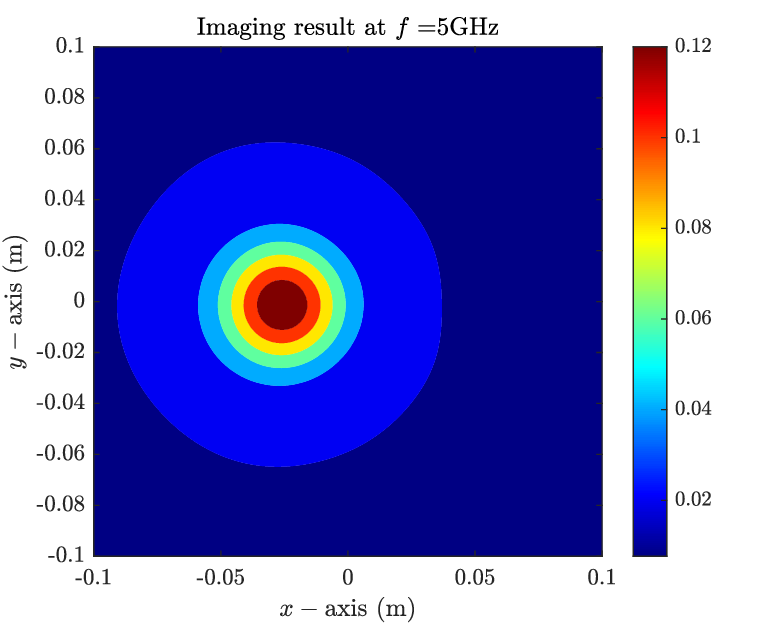}\hfill
\includegraphics[width=.25\textwidth]{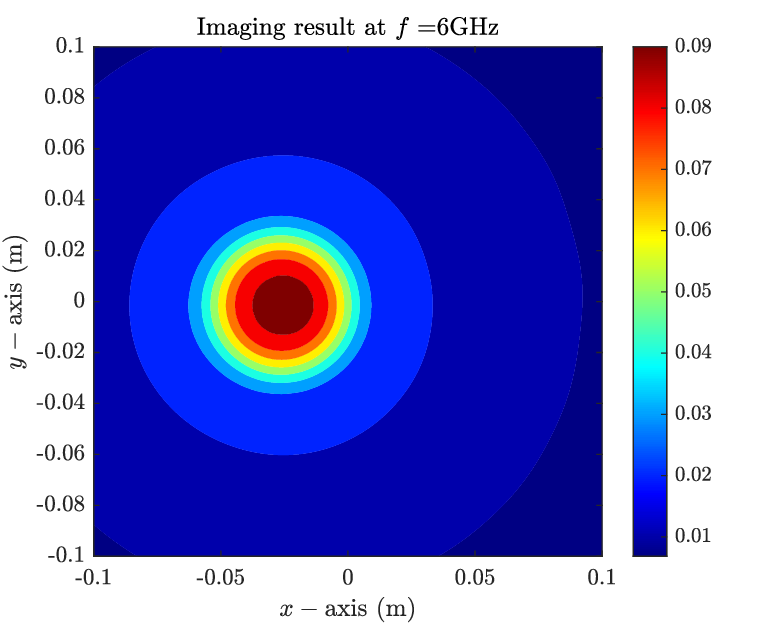}\hfill
\includegraphics[width=.25\textwidth]{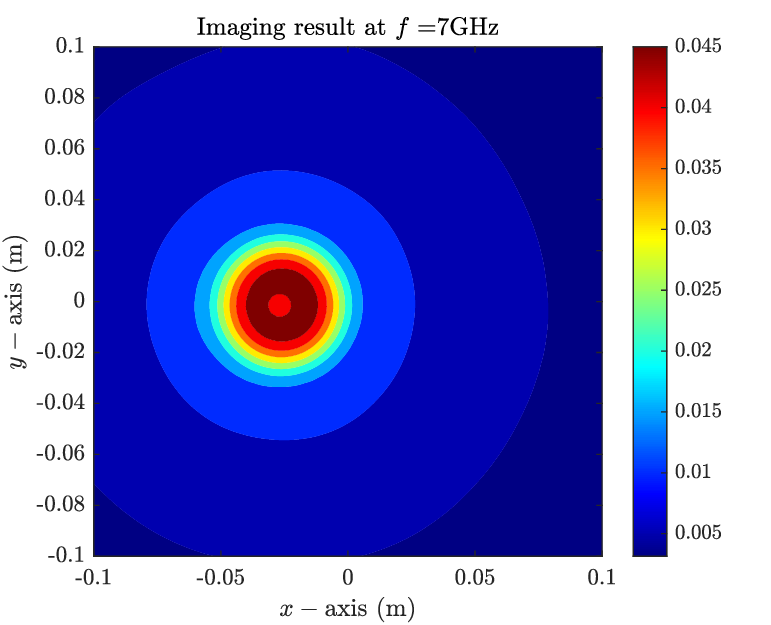}\hfill
\includegraphics[width=.25\textwidth]{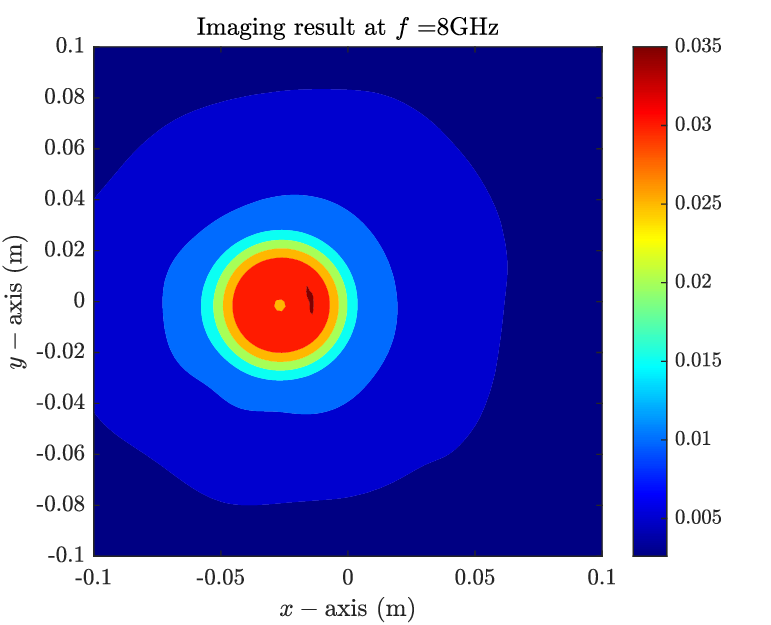}
\caption{\label{Single}Maps of $\mathfrak{F}(\mx)$ at $f=1,2,\ldots,\SI{8}{\giga\hertz}$.}
\end{center}
\end{figure}

Figure \ref{Multiple} depicts maps of $\mathfrak{F}(\mx)$ with different frequencies in the presence of two objects. Unlike identifying a single object, it is impossible to recognize two objects when $f=\SI{1}{\giga\hertz}$. It is worth noting that to distinguish two objects, half of the applied wavelength $\lambda$ should be less than the distance between them. However, because $\lambda\approx\SI{0.3}{\meter}$ when $f=\SI{1}{\giga\hertz}$ and the distance between two objects is approximately $\SI{0.09}{\meter}$, they are indistinguishable. Note that if $f\geq\SI{2}{\giga\hertz}$ then two objects can be clearly identified.

\begin{figure}[h]
\begin{center}
\includegraphics[width=.25\textwidth]{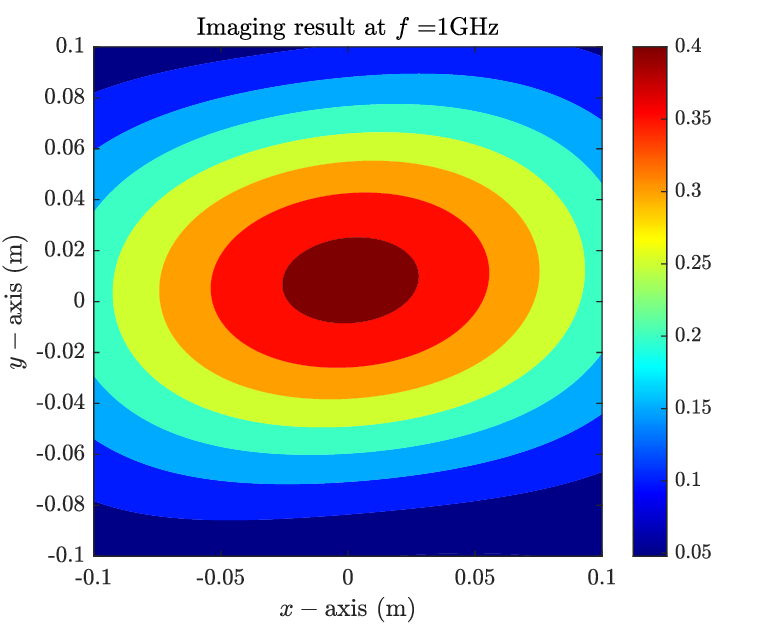}\hfill
\includegraphics[width=.25\textwidth]{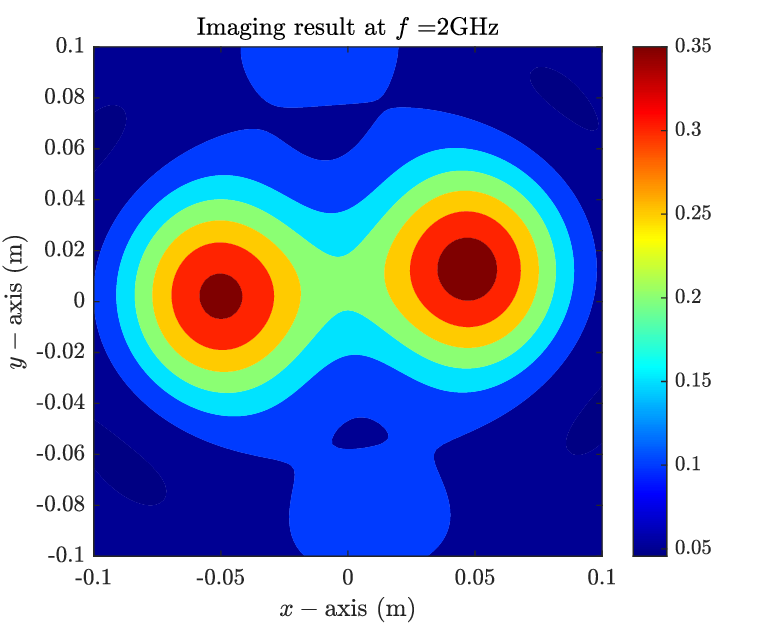}\hfill
\includegraphics[width=.25\textwidth]{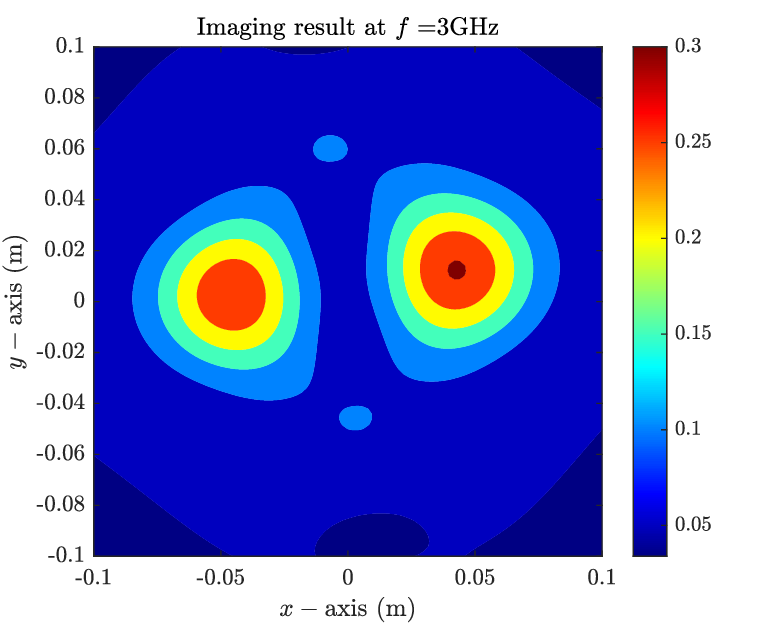}\hfill
\includegraphics[width=.25\textwidth]{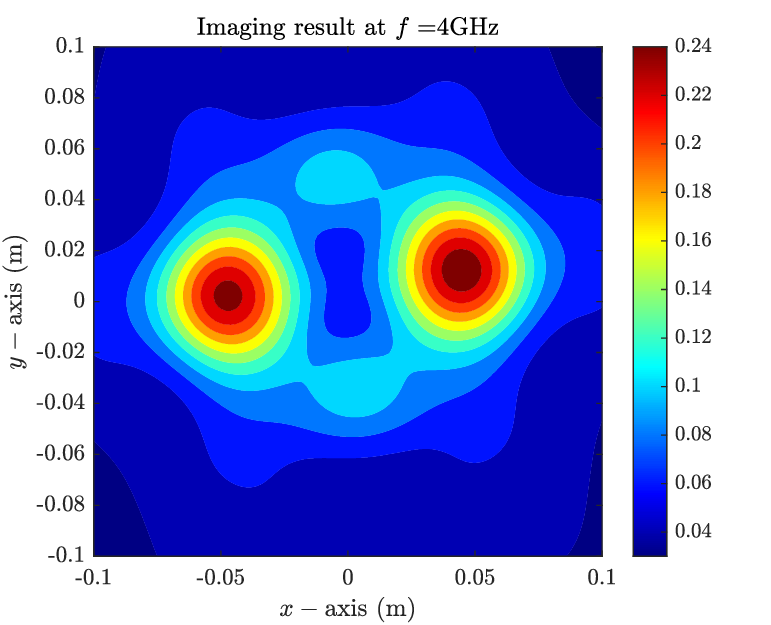}\\
\includegraphics[width=.25\textwidth]{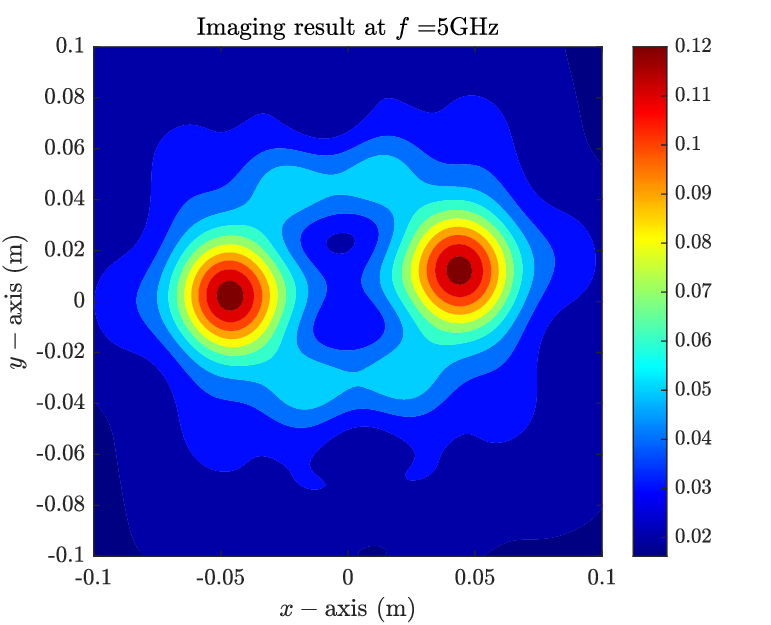}\hfill
\includegraphics[width=.25\textwidth]{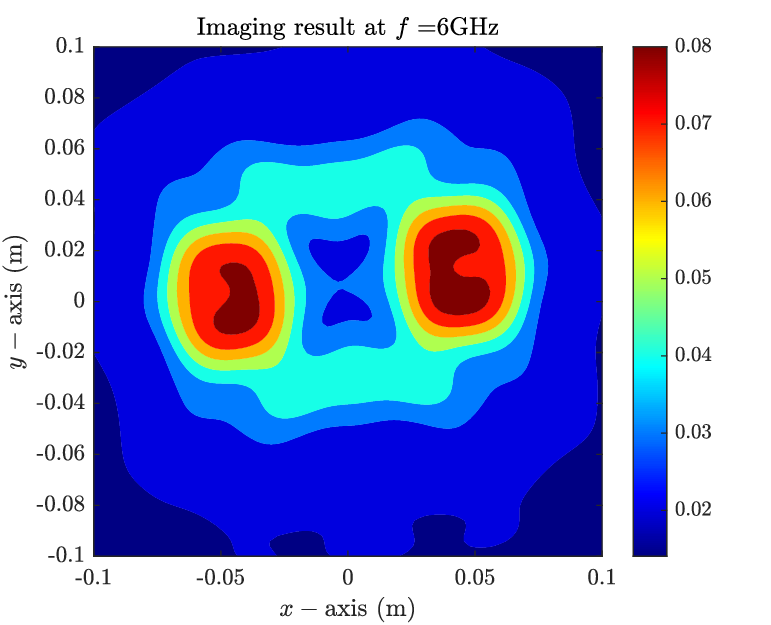}\hfill
\includegraphics[width=.25\textwidth]{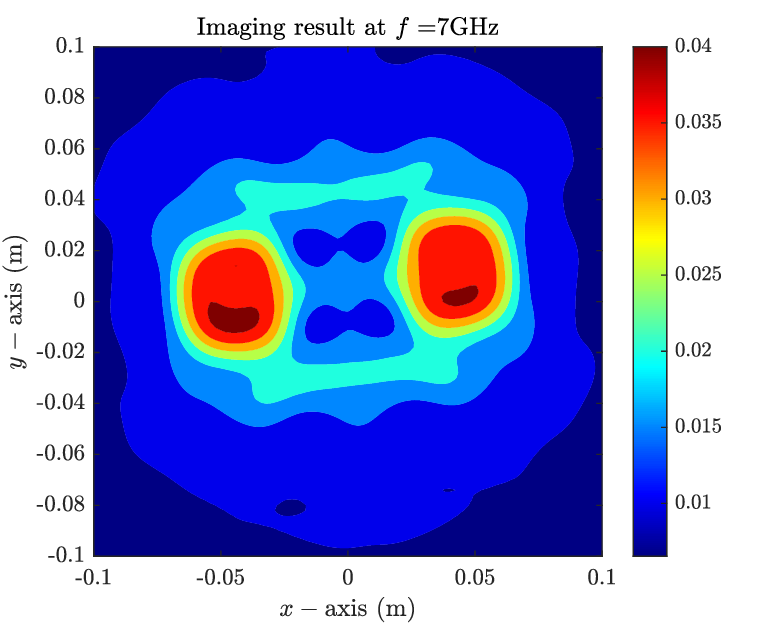}\hfill
\includegraphics[width=.25\textwidth]{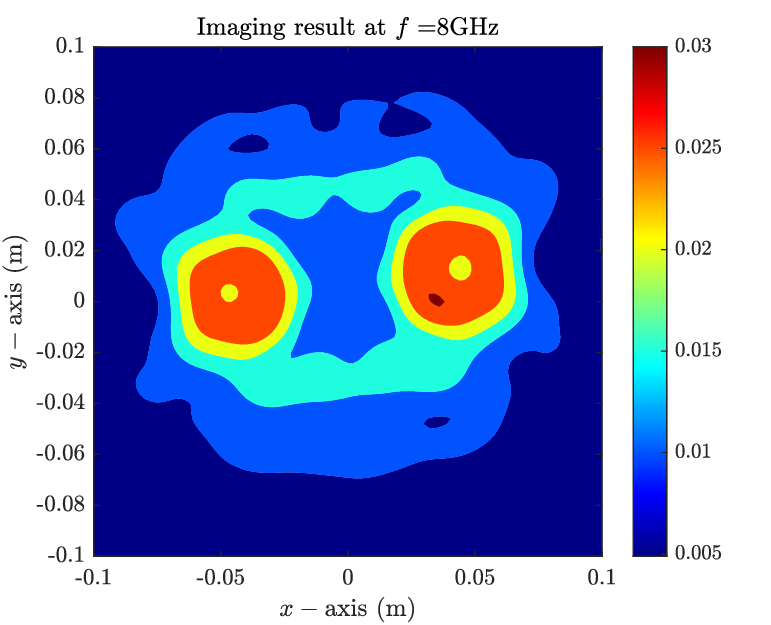}
\caption{\label{Multiple}Maps of $\mathfrak{F}(\mx)$ at $f=1,2,\ldots,\SI{8}{\giga\hertz}$.}
\end{center}
\end{figure}

\section{Conclusion}\label{sec:5}
We applied the KM for a fast imaging of small objects from experimental Fresnel dataset. By considering the relationship between the imaging function and the square of the Bessel function of order zero and the infinite series of the Bessel function of nonzero order, certain properties including the unique determination of the KM were validated. Based on the simulation results with experimental data, we concluded that the KM is an effective algorithm for detecting small objects.

The main subject of this study is the imaging of small object in two-dimensional problem. An extension to the current study using 3D Fresnel dataset \cite{GSE} will be an interesting research topic.

\section*{Acknowledgment}
This study was supported by the National Research Foundation of Korea (NRF) grant funded by the Korean government (MSIT) (NRF-2020R1A2C1A01005221).

\end{document}